\documentclass[a4paper, 12pt]{amsart}
\usepackage{amssymb, amsmath, amsthm, color}
\usepackage[colorlinks=true, breaklinks=true, linkcolor=black, citecolor=blue, urlcolor=red]{hyperref} 
\usepackage[francais,english]{babel}
\usepackage{tikz-cd}

\newtheorem{theorem}{Theorem}[section]

\newtheorem{corollary}[theorem]{Corollary}
\newtheorem{proposition}[theorem]{Proposition}

\newtheorem{remark}[theorem]{Remark}

\newcommand{\onto}{\twoheadrightarrow}
\newcommand{\into}{\hookrightarrow}

\newcommand{\C}{\mathbf C}
\newcommand{\cC}{\mathcal C}

\DeclareMathOperator{\id}{id}

\newcommand{\N}{\mathbb N}

\newcommand{\cO}{\mathcal O}

\newcommand{\ot}{\otimes}
\newcommand{\cP}{\mathcal P}

\DeclareMathOperator{\Rep}{Rep}

\newcommand{\cS}{\mathcal S}
\DeclareMathOperator{\State}{State}

\newcommand{\cT}{\mathcal T}

\newcommand{\varep}{\varepsilon}
\newcommand{\wh}{\widehat}

\begin{document}

\title{Representations of the infinite Cuntz algebra}

\author{Arnaud Brothier and David E.~Evans}
\address{Arnaud Brothier\\	University of Trieste, Department of Mathematics, Informatics and Geosciences, Section Mathematics, via Valerio 12/1, 34127, Trieste, Italy and School of Mathematics and Statistics, University of New South Wales, Sydney NSW 2052, Australia}
\email{arnaud.brothier@gmail.com\endgraf
		\url{https://sites.google.com/site/arnaudbrothier/}}
	\address{David E.~Evans\\ School of Mathematics, Cardiff University, Senghennydd Road, Cardiff CF24 4AG, Wales, U.K.}
	\email{EvansDE@cf.ac.uk}

\maketitle

\begin{abstract}
We define an intrinsic unital completely positive (ucp) map from the Cuntz algebra to the Pythagorean with finitely many generators. This permits to lift states and cyclic representations. 
This process restricts to the Cuntz-Toeplitz algebras and the contraction algebras. Taking inductive limits we obtain an effective machinery for constructing states on the infinite Cuntz algebra $\cO_\infty$.
Combining this approach with previous techniques of the first author we obtain large families of pairwise inequivalent irreducible representations of $\cO_\infty$: one family being in an obvious bijection with the unit sphere of Hilbert's space.
\end{abstract}

\section*{Introduction}

Cuntz algebras $\cO_n$ are fundamental objects in the study of C*-algebras \cite{Dixmier64, Cuntz77}, see also \cite{Evans80}. They were discovered by Dixmier and then got deeply studied by Cuntz.
They were the first examples of separable, nuclear, purely infinite and simple C*-algebras. 
Remarkably, $\cO_2$ and $\cO_\infty$ play the role of zero and unity, respectively, when considering their tensor product with a large class of C*-algebras \cite{Kirchberg-Phillips00}. 
This makes these two algebras essential in the classification program of C*-algebras.

Constructing representations of them is rather easy but checking irreducibility and unitary conjugacy of given representations of them are difficult tasks.
Cuntz algebras are simple and of non type I. 
By Glimm theorem we deduce that they have a pathological spectrum (i.e.~the unitary classes of irreducible representations) \cite{Glimm61}.

In \cite{Brothier-Jones19} was defined the Pythagorean algebra $\cP_n$ for finite $n$ consisting in taking the Cuntz algebra and removing the isometry condition on the generators.
A remarkable explicit functorial process $\Pi:\Rep(\cP_n)\to\Rep(\cO_n)$ permits to lift representations of $\cP_n$ into representations of $\cO_n$.
All representations of $\cO_n$ arise in that manner (i.e.~$\Pi$ is essentially surjective).
However, the functor $\Pi$ typically does \emph{not} preserve irreducibility nor unitary classes.
It was later discovered, in a series of articles of Wijesena and the first author, a stronger notion of irreducibility for representations of $\cP_n$ so that their images by $\Pi$ remain irreducible and for which unitary conjugacy is fully preserved \cite{Brothier-Wijesena25,Brothier-Wijesena26, Brothier-Wijesena24}. 
These have permitted to describe a large part of the spectrum of $\cO_n$ as a disjoint union of smooth manifolds.
These techniques were very successful. 
Though, they were based on graphical calculus of categories of locally finite forests.
These techniques and constructions do not seem to adapt for the infinite Cuntz algebra $\cO_\infty$.

We develop in this article a new approach for constructing and classifying representations of the Cuntz algebras using unital completely positive (ucp) maps and states. 
By definition, there is an obvious quotient map $q_n:\cP_n\onto\cO_n$.
We construct a section $\varphi_n$ of $q_n$ which is a ucp map.
It is in fact the unique such ucp section.
This permits to transform states of $\cP_n$ to states of $\cO_n.$ 
Using the Gelfand-Naimark-Segal (GNS) construction we deduce a new process for lifting (cyclic) representations of $\cP_n$ to representations of $\cO_n$.
We observe that this process based on states (mostly) coincides with the previous functor $\Pi:\Rep(\cP_n)\to\Rep(\cO_n)$ of \cite{Brothier-Jones19}.

Our strategy continues to work for the Cuntz-Toeplitz algebras 
$$\cT_n:=C^*\langle t_1,\cdots,t_n| t_j^*t_i=\delta_{i,j}, \sum_i t_i^*t_i\leq 1\rangle$$ and the contraction algebras $$\cC_n:=C^*\langle c_1,\cdots,c_n| \sum_i c_i^*c_i\leq 1\rangle.$$
These C*-algebras can be interpreted as the subalgebras of $\cO_{n+1}$ and $\cP_{n+1}$ generated by the $n$ first generators (but not the last one), respectively. 
Then our ucp map $\varphi_n:\cO_{n+1}\to\cP_{n+1}$ restricts to a ucp map $\cT_n\to \cC_n.$
We may now lift representations of $\cC_n$ into representations of $\cT_n$.
Additionally, there are obvious embeddings $\cC_n\into \cC_{n+1}$ and $\cT_n\into\cT_{n+1}$ (unlike in the Pythagorean and Cuntz algebra cases) that are compatible with our ucp maps.
Taking direct limits we obtain a ucp map
$$\varphi_\infty:\cO_\infty\to\cP_\infty$$
where $\cO_\infty$ and $\cP_\infty$ have infinitely many generators satisfying a key inequality (rather than equality) in their presentation.
Any state $\sigma$ of $\cP_\infty$ defines a state $\sigma\circ \varphi_\infty$ on $\cO_\infty$.
Then, the GNS construction produces a cyclic representation of $\cO_\infty$.
All cyclic representations of $\cO_\infty$ arise in that manner (and in particular all irreducible representations of $\cO_\infty$).

Finally, by mimicking strategies developed by Wijesena and the first author, we obtain a plethora of irreducible representations of $\cO_\infty$ that are unitarily inequivalent. 
In particular, each point $z=(z_n)_{n\geq 0}$ of the unit sphere of Hilbert's space $\ell^2(\N)$ defines an irreducible representation $\pi_z$ of $\cO_\infty$ so that $\pi_z\simeq\pi_w$ if and only if $z=w.$

{\bf AI statement.} 
The basic free versions of ChatGPT, Mistral and Claude have been used for finding and checking references and additionally for correcting typos. All strategies and proofs have been entirely discovered and written down by the authors only. 

{\bf Acknowledgements.} This work initiated from a lively discussion of the authors together with Corey Jones at the Operator Algebras and Mathematical Physics workshop hosted at ICMS, Bayes Center, Edinburgh in the fall of 2025. We are very grateful to the organisers and the sponsors of this event. We warmly thank Corey for throwing pertinent ideas and for his communicative enthusiasm toward this project. 
An Emeritus Leverhulme Fellowship has supported a visit of DE in May 2026 in Trieste. We thank the sponsor and the hospitality of the University of Trieste.
AB thanks Jamie Gabe for enlightening discussions on the Cuntz algebras.

\section{Definitions and notations}
All Hilbert spaces and C*-algebras considered are separable.
A morphism, map or arrow between C*-algebras means a continuous unital *-algebra morphism.

Consider a natural number $n\geq 1$.
Define the $n$th Cuntz algebra $\cO_n$ to be the universal C*-algebra with presentation
$$\cO_n:=C^*\langle s_1,\cdots,s_n| \sum_{i=1}^n s_i^*s_i=1, s_j^*s_j=1, \forall 1\leq j\leq n\rangle.$$
This means that for any Hilbert space $H$ and family of isometries $S_1,\cdots,S_n$ satisfying $\sum_i S_i^*S_i=\id_H$ there is a unique representation of $\cO_n$ sending $s_i$ to $S_i.$ 
This C*-algebra $\cO_n$ is simple: all nonzero representations are faithful. 
If $\mu=\mu_1\cdots\mu_m\in \{1,\cdots,n\}^*$ is a finite word with letters in $\{1,\cdots,n\}$ we write $s_\mu$ for $s_{\mu_1}\cdots s_{\mu_m}.$
It is standard that the closed linear span of the $s_\mu s_\nu^*$ with $\mu,\nu\in\{1,\cdots,n\}^*$ is equal to $\cO_n$ \cite{Cuntz77}.
We will often write $(S_i,H)$ to express a representation of $\cO_n$ and write $S_\mu$ for $S_{\mu_1}\cdots S_{\mu_m}.$

Define the $n$th Pythagorean algebra 
$$\cP_n:=C^*\langle a_1,\cdots,a_n| \sum_{i=1}^na_i^*a_i=1\rangle.$$
Observe that the formula $a_i\mapsto s_i^*$ defines a unique (surjective) morphism $$q_n:\cP_n\onto\cO_n.$$ 
Hence, $\cO_n$ is a quotient of $\cP_n.$
If $\mu=\mu_1\cdots\mu_m\in \{1,\cdots,n\}^*$, then we write $a_\mu$ for $a_{\mu_m}\cdots a_{\mu_1}.$
Hence, we have reversed the order. This makes the formula easier to write.
In particular, the quotient map $\cP_n\onto \cO_n$ sends $a_\mu$ to $s_\mu^*.$
If $K$ is a Hilbert space and $A_1,\cdots,A_n\in B(K)$ are operators satisfying $\sum_{i=1}^nA_i^*A_i=\id_K$, then there exists a unique representation of $\cP_n$ on $K$ sending $a_i$ to $A_i.$
We may then express representations of $\cP_n$ as tuples $(A_i,H).$

Define the $n$th Cuntz-Toeplitz algebra to be
$$\cT_n:=C^*\langle s_1,\cdots,s_n\rangle \subset \cO_{n+1}$$
the C*-subalgebra generated by the first $n$ generators of $\cO_{n+1}$.
A generalisation of Coburn theorem proved by Cuntz shows that $\cT_n$ has for presentation
$$C^*\langle t_1,\cdots, t_n| \sum_{i=1}^nt_it_i^*\leq 1, t_j^*t_j=1,\forall 1\leq j\leq n\rangle$$
which can be written in a more condensed manner as
$$C^*\langle t_1,\cdots,t_n| t_j^*t_i=\delta_{i,j}, \forall 1\leq i,j\leq n\rangle.$$
There is an obvious quotient map $\cT_n\onto \cO_n, t_i\mapsto s_i$ whose kernel is the ideal generated by $1-\sum_{i=1}t_it_i^*$. This latter is isomorphic to the C*-algebra of the compact operators.
A representation of $\cT_n$ consists in $n$ isometries with orthogonal ranges. If the direct sum of these ranges is proper, then the representation is automatically faithful and otherwise defines a representation of $\cO_n.$
Cuntz-Toeplitz algebras have the advantage to have obvious embeddings $\cT_n\into \cT_{n+1}$ while the Cuntz algebras don't.

Define the $n$th contraction algebra to be
$$\cC_n:=C^*\langle a_1,\cdots,a_n\rangle \subset \cP_{n+1}.$$
It is straightforward to prove that $\cC_n$ admits the following presentation
$$C^*\langle c_1,\cdots,c_n| \sum_{i=1}^n c_i^*c_i\leq 1\rangle.$$
We have obvious quotient maps $\cC_n\onto\cP_n,c_i\mapsto a_i$ and $\cC_n\onto \cT_n, c_i\mapsto t_i^*$.
Just like for the $\cT_n$'s, we have obvious embeddings $\cC_n\into \cC_{n+1}$ while we don't have such embeddings between the Pythagorean algebras.

The infinite Cuntz algebra $\cO_\infty$ is the universal C*-algebra of countably many isometries $s_1,s_2,\cdots$ with orthogonal ranges.
This is again a simple C*-algebra. 
Hence, a countably infinite family of isometries with orthogonal ranges defines a faithful representation of $\cO_\infty.$
The direct sum (i.e.~the completion of the infinite algebraic direct sum) of the range of these isometries may be the whole space or not. 
Knowing this we deduce that the inductive limit of the Cuntz-Toeplitz algebra $\cT_n$ is isomorphic to $\cO_\infty$:
$$\varinjlim_n \cT_n\simeq \cO_\infty.$$

For commodity we define the infinite Pythagorean algebra $\cP_\infty$ to be the universal C*-algebra having for generators $a_1,a_2,\cdots$ and satisfying that for any finite subset $F\subset \N$ we have $\sum_{i\in F} a_i^*a_i\leq 1.$
This defines a C*-algebra isomorphic to the direct limit of the contraction algebras:
$$\varinjlim_n \cC_n\simeq \cP_\infty.$$

Observe that when $n=1$, then $\cO_1,\cT_1,\cC_1$ are the universal C*-algebras generated by a unitary, an isometry, and a contraction, respectively, and $\cT_1\simeq \cP_1.$

\section{A unital completely positive map}
Fix $n\geq 1.$
Take a C*-algebra $A$ that contains copies of $\cO_n$ and $\cP_n$ (for instance $\cO_n\ot \cP_n$) and consider the element 
$$y:=\sum_{i=1}^n s_ia_i\in A.$$
This is an isometry satisfying $y^*s_j=a_j^*$ for $1\leq j\leq n$. 
Dilate the co-isometry $y^*$ into the unitary
$$u:=\begin{pmatrix}
    y^* & 0 \\
    p & y
\end{pmatrix} \in M_2(A) \text{ where } p:=1-yy^*.$$
Since $u$ is a unitary, we have that the mapping 
$$s_i\mapsto S_i:= u \begin{pmatrix}
    s_i & 0 \\ 0 & s_i
\end{pmatrix}$$
defines a morphism $\pi:\cO_n\to M_2(A)$ by universality of $\cO_n.$
We may now define our unital completely positive map (ucp map) 
$$\varphi:\cO_n\to A,\ x\mapsto E\ \pi(x)\ E \text{ where } E=\begin{pmatrix}
    1& 0\\ 0& 0
\end{pmatrix}.$$

\begin{proposition}
The ucp map $\varphi$ is valued in $\cP_n$ and satisfies that 
$$\varphi(s_\mu s_\nu^*)=a_\mu^* a_\nu \text{ for all } \mu,\nu\in\{1,\cdots,n\}^*.$$
In particular, $\varphi$ is a section of the quotient map $q_n:\cP_n\onto \cO_n.$
\end{proposition}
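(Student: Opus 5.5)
The plan is to compute $\varphi$ on the spanning elements $s_\mu s_\nu^*$ by exploiting the lower-triangular shape of the matrices $S_i$. Then I conclude by density and continuity. Throughout I work in $A=\cO_n\ot\cP_n$, identifying $s_i$ with $s_i\ot 1$ and $a_j$ with $1\ot a_j$. This gives the \emph{commutation} $s_ia_j=a_js_i$ and $s_i^*a_j=a_js_i^*$, which the computation needs. I read the Cuntz relation as $s_j^*s_i=\delta_{i,j}$ with $\sum_i s_is_i^*=1$.

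\textbf{Step 1: compute the corner of each generator.} Using commutation and $s_k^*s_i=\delta_{k,i}$, I get $y^*s_i=\sum_k a_k^*s_k^*s_i=a_i^*$, which is the identity recorded before the statement. Also $y^*y=\sum_k a_k^*a_k=1$. Hence
$$S_i=\begin{pmatrix} y^*s_i & 0\\ ps_i & ys_i\end{pmatrix}=\begin{pmatrix} a_i^* & 0\\ ps_i & ys_i\end{pmatrix},$$
which is lower triangular. Since products of lower-triangular matrices are lower triangular with diagonal entries multiplying, for $\mu=\mu_1\cdots\mu_m$ the $(1,1)$ entry of $S_\mu$ is $a_{\mu_1}^*\cdots a_{\mu_m}^*$. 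With the reversed convention $a_\mu=a_{\mu_m}\cdots a_{\mu_1}$, this equals $a_\mu^*$. In other words, $ES_\mu E=a_\mu^*E$.

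\textbf{Step 2: the compression is multiplicative on these products.} Lower triangularity means $ES_\mu(1-E)=0$, so $ES_\mu=ES_\mu E$. Taking adjoints gives $S_\nu^*E=ES_\nu^*E$. Therefore
$$\varphi(s_\mu s_\nu^*)=ES_\mu S_\nu^*E=(ES_\mu E)(ES_\nu E)^*=a_\mu^*a_\nu,$$
read in the $(1,1)$ corner.

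\textbf{Step 3: range and section property.} The span of the $s_\mu s_\nu^*$ is dense in $\cO_n$, and $\varphi$ is linear and contractive (being ucp). Since each $a_\mu^*a_\nu$ lies in the closed subalgebra $1\ot\cP_n$, the map $\varphi$ takes values in $\cP_n$. Finally, $q_n(a_\mu^*a_\nu)=s_\mu s_\nu^*$, because $q_n(a_\mu)=s_\mu^*$. So $q_n\circ\varphi$ agrees with $\id$ on a dense set, hence everywhere, and $\varphi$ is a section of $q_n$.

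The main point needing care is Step 1, specifically the commutation between the $s_i$ and the $a_j$ used to simplify $y^*s_i$. It is automatic in $\cO_n\ot\cP_n$, which is why I fix that choice of $A$. For a general $A$ containing copies of both algebras, the same commutation hypothesis would have to be assumed.
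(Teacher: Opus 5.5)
Your proof is correct and follows the paper's argument: $S_i$ is lower triangular with corner $y^*s_i=a_i^*$, so the corner of $S_\mu S_\nu^*$ is $a_\mu^*a_\nu$, and density then gives both the range statement and the section property. One correction to your closing caveat: commutation between the $s_i$ and the $a_j$ is never used, because $y^*s_i=\sum_k a_k^*s_k^*s_i$ and $y^*y=\sum_{k,l}a_k^*s_k^*s_la_l$ collapse using only $s_k^*s_l=\delta_{k,l}$, so your argument works verbatim for any $A$ containing copies of $\cO_n$ and $\cP_n$.
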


\begin{proof}
Fix some words $\mu=\mu_1\cdots \mu_m$ and $\nu=\nu_1\cdots \nu_k$.
By definition $S_i=\pi(s_i)$ is a lower-triangular matrix with top-left coefficient equal to $y^*s_i$ which is $a_i^*.$
Hence, $S_\mu$ is a lower-triangular matrix with top-left coefficient $a_{\mu_1}^*\cdots a_{\mu_m}^*=(a_{\mu_m}\cdots a_{\mu_1})^*=a_\mu^*$ according to our conventions.
Therefore, $S_\nu^*$ is an upper-triangular matrix with top-left coefficient $a_\nu.$
We deduce that the top-left coefficient of $S_\mu S_\nu^*$ is $a_\mu^*a_\nu$ and thus $\varphi(s_\mu s_\nu^*)=a_\mu^*a_\nu.$
This proves the formula.
Since the $s_\mu s_\nu^*$ form a total family in $\cO_n$ we deduce that $\varphi$ is valued in $\cP_n.$
Finally, observe that $q_n\circ \varphi(s_\mu s_\nu^*)=s_\mu s_\nu^*$ implying by density that $q_n\circ \varphi$ is the identity of $\cO_n.$
\end{proof}

\begin{remark}
One can prove that there is only one ucp map $\psi:\cO_n\to\cP_n$ satisfying $\psi(s_i)=a_i^*.$
Hence, such a ucp map $\psi$ must be equal to $\varphi$ and thus be a section of the quotient map $q_n:\cP_n\onto\cO_n.$
This exhibits a certain canonical property of our ucp map $\varphi:\cO_n\to\cP_n$.
\end{remark}

\section{Lifting states and representations}

We have proved that the section $\varphi:\cO_n\to \cP_n, s_\mu s_\nu^*\mapsto a_\mu^*a_\nu$ of $q_n:\cP_n\to \cO_n$ is ucp.
Therefore, it defines a map of state spaces:
$$\State(\varphi):\State(\cP_n)\to \State(\cO_n), f\mapsto f\circ \varphi.$$
This map must be surjective since $q_n\circ \varphi=\id_{\cO_n}.$
Hence, all states of $\cO_n$ can be realised using states of $\cP_n.$

Let us use $\varphi$ to lift representations of $\cP_n$ into representations of $\cO_n.$
Hence, take $(\pi,K)$ to be a representation of $\cP_n$. Consider a unit vector $\xi$ of $K$ and form the vector state
$$f:\cP_n\to\C, x\mapsto \langle \pi(x)\xi,\xi\rangle.$$
Compose now with $\varphi$ obtaining $\wh f:=f\circ\varphi$ a state of $\cO_n$.
We apply the Gelfand-Naimark-Segal (GNS) construction to $\wh f$ obtaining a triple $(\wh \pi, \wh K,\wh\xi)$ where $(\wh\pi,\wh K)$ is a representation of $\cO_n$, $\wh\xi$ is a cyclic unit vector of $\wh K$ and 
$$\langle \wh\pi(z) \wh \xi,\wh \xi\rangle =\wh f(z) \text{ for all } z\in\cO_n.$$
Using again that $\varphi$ is a section of $q_n$ we deduce that $(\wh \pi,\wh K)$ can be \emph{any} representation of $\cO_n$ admitting a cyclic vector.
In particular, one can construct \emph{every} irreducible representation of $\cO_n$ in that manner.

In \cite{Brothier-Jones19}, a process was defined to lift representations of $\cP_n$ to representations of $\cO_n.$
This process was later extensively studied (in the $n=2$ case) in \cite{Brothier-Wijesena25,Brothier-Wijesena26,Brothier-Wijesena24}. 
This process defines the so-called Pythagorean functor 
$$\Pi:\Rep(\cP_n)\to\Rep(\cO_n)$$
between the representation category satisfying that the functor $\Pi\circ U$ is naturally isomorphic to the identity functor of $\Rep(\cO_n)$ where 
$$U:\Rep(\cO_n)\to \Rep(\cP_n)$$
is the forgetful functor constructed via the quotient map $q_n:\cP_n\onto\cO_n.$
Hence, $\Pi$ can be interpreted as a functorial section of $q_n$. 
We prove below that $\Pi$ is closely related to the process $(\pi,K,\xi)\mapsto (\wh\pi,\wh K,\wh\xi)$ described above.

To explain the connection we recall some facts on the functor $\Pi$.
Fix a representation $(\pi,K)$ of $\cP_n$ and write $A_i$ for $\pi(a_i).$
Consider now $\ell^2(\{1,\cdots,n\}^*)\otimes K$ the Hilbert space with family of total vectors $\delta_\mu\otimes \eta$ with $\mu$ a finite word in the letters $1,\cdots,n$ and $\eta\in K.$
Mod out this space by the equivalence relation $\sim$ generated by
$$\delta_\mu\otimes \eta\sim \sum_{i=1}^n \delta_{\mu\cdot i}\ot A_i(\eta).$$
Since $(A_i,K)$ is a representation of $\cP_n$ we have that $\eta\mapsto \oplus_i A_i(\eta)$ is isometric. 
Hence, $\delta_\mu\otimes \eta$ and $\sum_{i=1}^n \delta_{\mu\cdot i}\ot A_i(\eta)$ have same norm.
The quotient provides a Hilbert space denoted $\Pi(K)$.
We write $[\mu,\eta]$ for the class of $\delta_\mu\ot \eta$ in $\Pi(K)$ which has norm $\|\eta\|.$
Set $S_i(\delta_\mu\ot \eta):=\delta_{i\cdot \mu}\ot \eta$.
This defines a representation of $\cO_n$ on $\Pi(K).$
Additionally, we have an isometric embedding 
$$j_K:K\into \Pi(K), \eta\mapsto [\emptyset,\eta]$$
satisfying that 
$$j_K(A_i\eta)=S_i^*j_K(\eta).$$
Hence, the adjoint of the $S_i$'s can be interpreted as dilations of the $A_i$'s.

Take now a unit vector $\xi$ of $K$ and consider its image $j_K(\xi)$ inside $\Pi(K).$
Let 
$$f:\cP_n\to\C, x\mapsto \langle x\cdot\xi,\xi\rangle$$ be the vector state associated to $(A_i,K,\xi)$ and let 
$$g:\cO_n\to\C, z\mapsto \langle z\cdot j_K(\xi),j_K(\xi)\rangle$$ be the vector state of $\cO_n$ associated to the triple $(S_i,\Pi(K),j_K(\xi)).$
Observe that if $\mu,\nu\in\{1,\cdots,n\}^*$, then 
\begin{align*}
g(s_\mu s_\nu^*)& =\langle S_\mu S_\nu^* j_K(\xi),j_K(\xi)\rangle  = \langle S_\nu^* j_K(\xi), S_\mu^* j_K(\xi)\rangle\\
& = \langle j_K(A_\nu \xi) , j_K(A_\mu \xi)\rangle
 = \langle A_\nu \xi, A_\mu \xi\rangle\\
& = \langle A_\mu^*A_\nu \xi, \xi\rangle = f(a_\mu^* a_\nu).
\end{align*}
By density we deduce the following.

\begin{proposition}
Let $f$ be a state of $\cP_n$ and let $(A_i,K,\xi)$ be the associated GNS triple.
Consider now $(S_i,\Pi(K),j_K(\xi))$ as constructed above and write $g$ for the associated vector state.
We have that $g=f\circ \varphi.$
\end{proposition}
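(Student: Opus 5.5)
The plan is to compare the two states $g$ and $f\circ\varphi$ of $\cO_n$ on the total family $\{s_\mu s_\nu^* : \mu,\nu\in\{1,\cdots,n\}^*\}$. Both are bounded linear functionals, so agreement on this family extends by linearity to its linear span and by continuity to its closure, which is all of $\cO_n$ by the standard fact recalled in Section 1. Here $f$ is the vector state of the GNS triple $(A_i,K,\xi)$, so that $f(x)=\langle \pi(x)\xi,\xi\rangle$ with $A_i=\pi(a_i)$.

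The first step computes $f\circ\varphi$ on the generators. The earlier proposition gives $\varphi(s_\mu s_\nu^*)=a_\mu^*a_\nu$, hence
$$f\circ\varphi(s_\mu s_\nu^*)=\langle A_\mu^*A_\nu\xi,\xi\rangle=\langle A_\nu\xi,A_\mu\xi\rangle,$$
where $A_\mu=A_{\mu_m}\cdots A_{\mu_1}$ follows the reversed-order convention.

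The second step computes $g$ on the same elements. By adjointness, $g(s_\mu s_\nu^*)=\langle S_\nu^*j_K(\xi),S_\mu^*j_K(\xi)\rangle$. One then needs $S_\mu^*j_K(\eta)=j_K(A_\mu\eta)$ for every word $\mu$. This follows by induction on the length of $\mu$ from the single-letter relation $S_i^*j_K(\eta)=j_K(A_i\eta)$. Since $S_\mu^*=S_{\mu_m}^*\cdots S_{\mu_1}^*$, the letters are applied in the order $\mu_1$ first, and this produces exactly $A_{\mu_m}\cdots A_{\mu_1}=A_\mu$. Because $j_K$ is isometric, we obtain $g(s_\mu s_\nu^*)=\langle A_\nu\xi,A_\mu\xi\rangle$, which matches the first step.

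The only point requiring care, and the main obstacle, is the single-letter relation $S_i^*[\emptyset,\eta]=[\emptyset,A_i\eta]$ together with the well-definedness of $S_i$ on $\Pi(K)$. To verify it, write $[\emptyset,\eta]=\sum_j[j,A_j\eta]$, and note that $[j,\zeta]=S_j[\emptyset,\zeta]$. The Cuntz relations $S_i^*S_j=\delta_{ij}$ on $\Pi(K)$ then give $S_i^*[\emptyset,\eta]=[\emptyset,A_i\eta]$. Since these facts are stated earlier in the paper, I would simply invoke them, and the proof reduces to the index bookkeeping in the induction together with the density argument.
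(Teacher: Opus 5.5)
Your proposal is correct and follows essentially the same route as the paper: evaluate both states on $s_\mu s_\nu^*$, use $S_\mu^* j_K(\eta)=j_K(A_\mu\eta)$ and the fact that $j_K$ is isometric to get $\langle A_\nu\xi,A_\mu\xi\rangle=f(a_\mu^*a_\nu)$ on both sides, and conclude by density. Your induction on word length, which checks the reversed-order convention, and your derivation of the single-letter relation from $[\emptyset,\eta]=\sum_j S_j[\emptyset,A_j\eta]$ make explicit what the paper uses without comment.
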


We warn the reader that in general, when $(A_i,K,\xi)$ is a representation of $\cP_n$ with cyclic vector $\xi$, then $j_K(\xi)$ may not be cyclic for the representation $\Pi(K)$ of $\cO_n$.
Moreover, there are examples of (finite dimensional) irreducible representations $K$ of $\cP_n$ satisfying that $\Pi(K)$ is reducible.
This shows that $\State(\varphi)$ may send pure states into non-pure states.
We refer to \cite{Brothier-Wijesena24} and \cite{Brothier-Sims-Wijesena26} for details and explicit examples.

Nonetheless we have the following striking result.
\begin{theorem}\cite{Brothier-Wijesena24}
Let $K$ be a finite dimensional representation of $\cP_n$ so that $K$ does not contain any nonzero proper Hilbert subspace $K_0$ satisfying $a_i\cdot K_0\subset K_0$ for all $1\leq i\leq n$.
Then $\Pi(K)$ is an irreducible representation of $\cO_n$.

If $K,K'$ are two such representations (finite dimensional not containing any nonzero proper subspace closed under the $a_i$'s), then $\Pi(K)\simeq \Pi(K')$ if and only if $K\simeq K'$.    
\end{theorem}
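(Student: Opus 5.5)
We will prove both statements by computing the commutant of $\Pi(K)$ and relating intertwiners of $\Pi(K)$ to intertwiners of $K$. This uses the dilation structure $j_K(A_i\eta)=S_i^*j_K(\eta)$ and the fact that $\Pi(K)$ is the closed span of $S_\mu j_K(K)$ over all words $\mu$.

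Fix $\dim K=d<\infty$ and set $A_i=\pi(a_i)$. The hypothesis says that the only subspaces invariant under all $A_i$ are $0$ and $K$. For $n\geq 2$, or whenever the situation is non-degenerate, we take this together with finite dimensionality to mean that the semigroup generated by the $A_i$ acts "irreducibly".

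The first step is a compression. Let $T$ be a bounded operator on $\Pi(K)$ commuting with all $S_i$ and $S_i^*$. Define $R:=j_K^*Tj_K\in B(K)$. We would like $R$ to intertwine the $A_i$, but $j_K^*S_i^*j_K=A_i$ only gives $RA_i = j_K^*TS_i^*j_K=j_K^*S_i^*Tj_K$, and $j_K^*S_i^*\neq A_ij_K^*$ in general.

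Our remedy is the filtration $K_m:=\mathrm{span}\{S_\mu j_K(K):|\mu|=m\}$. This is an increasing sequence, since $[\mu,\eta]=\sum_i[\mu i,A_i\eta]$, and its union is dense. Let $P_m$ be the projection onto $K_m$. The compressions $P_mTP_m$ are then identified with operators on $\ell^2(\{1,\dots,n\}^m)\otimes K$ modulo the relation.

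The key step uses a Perron--Frobenius/ergodic argument for the unital completely positive map
$$\Phi(X)=\sum_i A_i^*XA_i$$
on $M_d(\C)$. By $\sum_i A_i^*A_i=1$ this map is unital, and irreducibility of the $A_i$ (no common invariant subspace) means $\Phi$ is an irreducible channel. Hence its fixed points are the scalars, and there is a unique faithful invariant state $\rho$.

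Applying this to the vector state $\langle Tj_K\xi,j_K\eta\rangle$, the commutation of $T$ with $S_i$ gives $R=\Phi(R)$, because
$$\langle T[\emptyset,\xi],[\emptyset,\eta]\rangle=\sum_{i,j}\langle T[i,A_i\xi],[j,A_j\eta]\rangle$$
and $S_j^*TS_i=TS_j^*S_i=\delta_{ij}T$. Thus $R$ is scalar. The same computation on $K_m$ shows that $P_mTP_m$ is a scalar times $P_m$ for every $m$. Letting $m\to\infty$ gives $T\in\C$, so $\Pi(K)$ is irreducible.

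For the second statement, let $T:\Pi(K)\to\Pi(K')$ be a unitary intertwiner. The same computation gives $R:=j_{K'}^*Tj_K$ satisfying
$$R=\sum_iA_i'^*RA_i.$$
The map $X\mapsto \sum_iA_i'^*XA_i$ on $B(K,K')$ has fixed points that are intertwiners, $RA_i=A_i'R$, provided both tuples are irreducible. We would show this by a Schwarz/multiplicative-domain argument applied to the block channel on $K\oplus K'$, using the faithful invariant states. The fixed points of the block channel form a $*$-algebra, namely the commutant of the block tuple. The off-diagonal corner is therefore $\{R: RA_i=A_i'R\}$. By Schur-type reasoning with irreducibility, a nonzero such $R$ is a scalar multiple of a unitary, so $K\simeq K'$.

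It remains to rule out $R=0$. The $K_m$-compressions give $j_{K'}^*S_\mu^*TS_\nu j_K=0$ for all $\mu,\nu$ of equal length, and then $T=0$ by density, which is a contradiction. The converse direction, $K\simeq K'\Rightarrow\Pi(K)\simeq\Pi(K')$, follows from functoriality of $\Pi$.

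The main obstacle is the fixed-point step. For the non-unital-invariant-state issue we must verify that $\Phi$ has a faithful invariant state, since otherwise the fixed points need not form an algebra. Finite dimensionality and the absence of invariant subspaces should give this via the standard Evans--Høegh-Krohn Perron--Frobenius theory for irreducible positive maps. If needed, we would apply it to the dual map $\Phi_*(\rho)=\sum_iA_i\rho A_i^*$, whose support projection is $A_i$-invariant and hence equals $1$.
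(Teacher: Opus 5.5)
Your proof is correct, but it takes a different route from the paper. The paper only cites this result; its own proof of the $\cO_\infty$ analogue in Section 5 follows the Brothier--Wijesena strategy. There, an $\varepsilon$-approximation argument plus compactness of the unit sphere of $K$ shows that every nonzero closed $S_i^*$-invariant subspace of $\Pi(K)$ meets $j_K(K)$, and therefore contains it. Irreducibility follows because $j_K(K)$ generates $\Pi(K)$. Classification follows because $j_K(K)$ is recovered intrinsically as the intersection of all nonzero submodules, so any intertwiner must carry $K$ onto $K'$.

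You instead compress via $T\mapsto j_{K'}^*Tj_K$. This sends the commutant into the fixed points of $\Phi(X)=\sum_iA_i^*XA_i$, and sends the intertwiner space into the fixed points of $X\mapsto\sum_i(A_i')^*XA_i$. The relation $(S'_\nu)^*TS_\mu=TS_\nu^*S_\mu=\delta_{\mu\nu}T$ for $|\mu|=|\nu|$, together with density of $\bigcup_mK_m$, makes this compression injective. The fixed points are then identified by the Kadison--Schwarz/multiplicative-domain argument, with the block map handling the off-diagonal case. Your support-projection argument correctly supplies the faithful invariant state: $A_i\rho A_i^*\le\rho$ forces $\ker\rho$ to be $A_i^*$-invariant.

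Your hedge about $n\geq 2$ is unnecessary. Since $\Phi(P)\le\lambda P$ holds exactly when $PA_i(1-P)=0$ for all $i$, the hypothesis on $K$ is precisely Evans--H{\o}egh-Krohn irreducibility of $\Phi$.

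As for what each route buys:
\begin{itemize}
\item The paper's argument is elementary, with no operator-system machinery. It also yields the canonical characterization of $K$ as the smallest nonzero submodule of $\Pi(K)$.
\item Yours gives an injective linear map from $\pi(\cO_n)'$, and from the intertwiner space, into a fixed-point space on a finite-dimensional space. This is essentially the Bratteli--Jorgensen--Kishimoto--Werner / Davidson--Kribs--Shpigel picture. It bounds $\dim\pi(\cO_n)'$ by $\dim\mathrm{Fix}(\Phi)$ even without the irreducibility hypothesis, and it transfers unchanged to the finite-dimensional $\cO_\infty$ setting of Section 5 when $\sum_iA_i^*A_i=1$.
\end{itemize}
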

We recommend a more readable proof in \cite{Brothier-Sims-Wijesena26}.

\section{The Toeplitz-Cuntz algebra case}
We will now consider several indices $n$. To make the text clearer we now add subscripts $n$ to various symbols.
Recall that we have the inclusions
\begin{equation}\label{eq:inclusion}\cC_n\subset \cP_{n+1} \text{ and } \cT_n\subset \cO_{n+1}\end{equation}
where $\cC_n,\cT_n$ are generated by the first $n$ generators of $\cP_{n+1},\cO_{n+1}$,  respectively.
Moreover, we have the quotient map $q_{n+1}:\cP_{n+1}\onto\cO_{n+1}$ and the ucp map $\varphi_{n+1}:\cO_{n+1}\to\cP_{n+1}.$
These two maps are compatible with the inclusions of \eqref{eq:inclusion}.
Hence, $\varphi_{n+1}$ restricts to a ucp map $\psi_n:\cT_n\to\cC_n$, $q_{n+1}$ restricts into a quotient map $p_n:\cC_n\onto\cT_n$ so that $p_n\circ\psi_n=\id_{\cT_n}$.
Therefore, $\psi_n$ provides a surjective map between the state spaces of $\cC_n$ and $\cT_n$.
In particular, we can lift (cyclic) representations of $\cC_n$ into representations of $\cT_n$ as previously explained in the $\cP_n-\cO_n$ case.
All cyclic representations of $\cT_n$ can be obtained in that manner and in particular all irreducible representations of $\cT_n$ can be built using representations of $\cC_n.$
However, we do not know how to characterise which representations of $\cC_n$ yield \emph{irreducible} representations of $\cT_n$ and how to classify them.

\section{The infinite Cuntz algebra case}

We have obvious embedding $\cC_{n}\into \cC_{n+1}$ and $\cT_{n}\into \cT_{n+1}$ and we may now consider the $\cC_n$'s and the $\cT_n$'s as two increasing towers of C*-algebras.
The maps $p_n:\cC_n\onto \cT_n$ and $\psi_n:\cT_n\to\cC_n$ (defined in the previous short section) are again compatible with these two towers (i.e.~$p_{n+1}\vert_{\cC_n}=p_n$ and $\psi_{n+1}\vert_{\cT_n}=\psi_n$).
We may then take direct limits of C*-algebras and maps.
Recall that $\varinjlim_n\cC_n\simeq\cP_\infty$ and $\varinjlim_n \cT_n\simeq\cO_\infty.$
Define 
$$q_\infty=\varinjlim_n(p_n:\cC_n\onto\cT_n)$$
which provides the quotient map
$$\cP_\infty\onto\cO_\infty, a_i\mapsto s_i^*.$$
Similarly, define
$$\varphi_\infty=\varinjlim_n(\psi_n:\cT_n\to \cC_n)$$
which provides the ucp map 
$$\cO_\infty\to\cP_\infty$$
satisfying $$\varphi_\infty(s_\mu s_\nu^*)=a_\mu^*a_\nu \text{ for all words } \mu,\nu\in\N^*.$$
The ucp map $\varphi_\infty$ is manifestly a section of the quotient map $q_\infty:\cP_\infty\onto\cO_\infty.$
We deduce a surjective state map 
$$\State(\varphi_\infty):\State(\cP_\infty)\onto\State(\cO_\infty)$$
and thus a way to lift cyclic representations of $\cP_\infty$ into representations of $\cO_\infty$.

We will now use this strategy to construct large families of pairwise inequivalent irreducible representations of $\cO_\infty$.
The construction uses the approach developed in this present article but the proof of irreducibility and the classification uses ideas developed in \cite{Brothier-Wijesena24} and simplified in \cite{Brothier-Sims-Wijesena26}. 
In order to apply these techniques we will demand that the representation $(A_i,K)$ of $\cP_\infty$ satisfies the following two axioms:
\begin{enumerate}
    \item $\dim(K)<\infty$ and
    \item the norm limit of the finite sums $\sum_{i\in F} A_i^*A_i$ is equal to the identity $\id_K$ (we want an equality not an inequality).
\end{enumerate}
Under these two assumptions we will be able to address the question of irreducibility and unitary conjugacy for the resulting representations of $\cO_\infty.$

We recall the construction of representations.
Take $(\pi,K,\Omega)$ to be a cyclic representation of $\cP_\infty$ with fixed unit cyclic vector $\Omega$.
Let $\rho:\cP_\infty\to\cO_\infty$ be the associated vector state.
Compose with $\varphi_\infty$ obtaining a state of $\cO_\infty$ and take the GNS triple $(\wh\pi,\wh K,\wh\Omega)$.
We now have a representation $\wh K$ of $\cO_\infty.$

\begin{theorem}
Let $(A_i,K)$ be a finite dimensional representation satisfying that $\sum_{i=1}^\infty A_i^*A_i$ converges to the identity of $K$ and such that if $K_0\subset K$ is a vector subspace satisfying $A_i(K_0)\subset K_0$ for all $i\geq 1$, then $K_0$ is either $\{0\}$ or $K$.
Let $\Omega$ be any unit vector of $K$. 
Apply to $(A_i,K,\Omega)$ the construction of above obtaining a triple $(S_i,\wh K,\wh \Omega)$ where the $S_i$'s denote the representations of $\cO_\infty.$
The following assertions are true.
\begin{enumerate}
    \item $\wh K$ is an irreducible representation of $\cO_\infty$;
    \item If $K,K'$ are two representations of $\cP_\infty$ satisfying the assumptions of above, then for any choices of unit vectors $\Omega,\Omega'$ we have that $\wh K\simeq \wh K'$ (as representations of $\cO_\infty$) if and only if $K\simeq K'$ (as representations of $\cP_\infty$).
\end{enumerate}
\end{theorem}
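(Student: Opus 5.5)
First I would identify $\wh K$ concretely with the Pythagorean dilation, exactly as in the finite case. Form $\ell^2(\N^*)\ot K$ modulo $\delta_\mu\ot\eta\sim\sum_{i\geq1}\delta_{\mu\cdot i}\ot A_i\eta$. Assumption (2) makes $\eta\mapsto\oplus_i A_i\eta$ an isometry $K\to\ell^2(\N)\ot K$, so the quotient is a Hilbert space $\Pi(K)$. On it, $S_i[\mu,\eta]=[i\cdot\mu,\eta]$ defines isometries with orthogonal ranges, and $j_K(A_i\eta)=S_i^*j_K(\eta)$. The computation preceding Proposition 3.1 then gives $\langle S_\mu S_\nu^*j_K\Omega,j_K\Omega\rangle=\langle A_\mu^*A_\nu\Omega,\Omega\rangle$, so the vector state equals $\rho\circ\varphi_\infty$. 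Hence $\wh K$ is the cyclic subrepresentation of $\Pi(K)$ generated by $j_K\Omega$. Irreducibility of $K$ shows $\{A_\nu\Omega\}$ spans $K$, so $S_\nu^*j_K\Omega$ spans $j_K(K)$. Since every $[\mu,\eta]=S_\mu j_K\eta$, the vector $j_K\Omega$ is cyclic and $\wh K\simeq\Pi(K)$. This removes the dependence on $\Omega$ in (2), and I would prove both assertions for $\Pi(K)$.

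For (1), I would follow the strategy of \cite{Brothier-Sims-Wijesena26}. Let $T$ commute with all $S_i$ and let $P$ be the orthogonal projection onto $j_K(K)$. The key structural step is to show that $Q_m=\sum_{|\mu|=m}S_\mu P S_\mu^*$ increases strongly to the identity. Monotonicity follows because $[\mu,\eta]=\sum_i[\mu i,A_i\eta]$, and totality of $\bigcup_m S_\mu j_K(K)$ is clear. Then $PTP$ defines an operator $X$ on $K$. Using $S_i^*T=TS_i^*$ together with $S_i^*j_K=j_KA_i$, I would show that the compression $X$ satisfies $XA_i^*$-type relations with the $A_i$. Concretely, the plan is to show that the map $\eta\mapsto P T j_K\eta$ intertwines the $A_i$ up to an error that vanishes, because $\|P S_\mu^*\cdot\|$ controls everything at deep levels.

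The cleaner route is to use finite dimensionality. Consider the completely positive map $\Phi(X)=\sum_i A_i^*XA_i$ on $B(K)$, which is unital by (2). An element $X$ of $B(K)$ is obtained from $T$ via $X=j_K^*Tj_K$, and invariance gives $\Phi(X)=X$. I then need two facts.
\begin{enumerate}
\item The only fixed points of $\Phi$ are scalars. This should follow from irreducibility plus a Perron--Frobenius/Choi argument: fixed points of a unital CP map whose Kraus operators have no common invariant subspace are trivial. This argument uses the finite-dimensional minimal-eigenspace trick, applied to $X$ and then its Hermitian parts, despite the infinitely many Kraus operators, which is acceptable since norm convergence holds.
\item $T$ is determined by its compressions $j_K^*S_\mu^*TS_\nu j_K$, which are $j_K^*T S_\mu^*S_\nu j_K$ and hence reduce to $X$.
\end{enumerate}
Together these give $T$ scalar.

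For (2), a unitary intertwiner $U:\Pi(K)\to\Pi(K')$ yields $X=j_{K'}^*Uj_K$ satisfying $\sum_i A_i'^*XA_i=X$. The standard argument then shows $X^*X$ and $XX^*$ are fixed points of the respective maps, so they are scalars, and $X$ is a multiple of a unitary intertwiner $K\to K'$ provided $X\neq0$. The main obstacle is proving $X\neq0$ (and, in (1), that $j_K^*Tj_K$ determines $T$). For this I would use the fact that $Q_m\to1$ to pick $\mu,\nu$ with $j_{K'}^*S_\mu^*US_\nu j_K\neq0$, and then rewrite this as $j_{K'}^*US_\mu^*S_\nu j_K$. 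Since $S_\mu^*S_\nu$ is zero or a single $S$-word or adjoint, pushing adjoints through via $S^*j=jA$ reduces the expression to $X$ composed with some $A$'s, forcing $X\neq0$. The converse direction of (2) is immediate from the functoriality of the construction.
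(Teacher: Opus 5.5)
Your proof is correct. The identification $\wh K\simeq\Pi(K)$ is the paper's unitary $S_\mu S_\nu^*\wh\Omega\mapsto[\mu,A_\nu\Omega]$, but after that you take a genuinely different route.

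\textbf{The paper's route.} The paper never looks at the commutant. It proves that every nonzero closed subspace of $\wh K$ invariant under the $S_i^*$ meets $K$: it approximates a unit vector by $\sum_{|\mu|=\ell}[\mu,\eta_\mu]$, finds one $\mu$ for which $S_\mu^*\xi$ and $\eta_\mu$ are almost parallel, and uses compactness of the unit sphere of $K$. Hence $K$ is the smallest nonzero $S^*$-invariant subspace. Irreducibility then follows from cyclicity of $\Omega$, and classification from the fact that an intertwiner must carry this canonical copy of $K$ onto $K'$.

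\textbf{Your route.} You compress $T\in\pi(\cO_\infty)'$ (or an intertwiner) to $K$ and use $\sum_iS_iS_i^*=1$ to land in the fixed points of $\Phi(X)=\sum_iA_i^*XA_i$ (respectively of $X\mapsto\sum_iA_i'^*XA_i$). You kill these with the extremal-eigenspace argument. You then reconstruct $T$ from $X$ through the coefficients $j_K^*S_\mu^*TS_\nu j_K\in\{A_\gamma^*X,\ XA_\gamma,\ 0\}$ on the total set $\{S_\nu j_K\eta\}$.

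\textbf{What each buys.} Your argument is shorter and purely algebraic. It also embeds the commutant of $\Pi(K)$ injectively into the fixed-point space of $\Phi$, which remains informative when $K$ is reducible. The paper's argument gives the stronger structural fact that $K$ is intrinsically recovered inside $\wh K$ as the intersection of all nonzero $S^*$-invariant subspaces, which makes classification immediate. Both use finite dimensionality at a single point: compactness of a sphere versus attainment of an extreme eigenvalue.

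\textbf{Two tidy-ups.} First, the $Q_m$ discussion is unnecessary and can be dropped. Second, the ``standard argument'' in (2) is cleanest via the identity
\[
\sum_i A_i^*X^*XA_i-X^*X=\sum_i (XA_i-A_i'X)^*(XA_i-A_i'X),
\]
which holds because $\sum_iA_i'^*XA_i=X$ and $\sum_iA_i'^*A_i'=1$. It gives $\Phi(X^*X)\geq X^*X$. The top eigenspace of $X^*X$ is therefore invariant under every $A_i$, so $X^*X$ is scalar. Then the right-hand side vanishes and $XA_i=A_i'X$ for all $i$. For $X\neq0$, $X/\|X\|$ is an isometric intertwiner whose range is $A'$-invariant, hence equal to $K'$. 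This also handles $XX^*$ and the intertwining in one stroke.
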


\begin{proof}
We start by describing $\wh K$ in a handy manner.
Consider the set $\N^*$ of words over the natural numbers $\N$ and the Hilbert space $\ell^2(\N^*)\otimes K$ that we mod out by the equivalence relation generated by
$$\delta_\mu\otimes \xi \sim \sum_{j\in \N} \delta_{\mu\cdot j} \otimes A_j \xi.$$
Hence, $\ell^2(\N^*)$ corresponds to the Fock space of $\ell^2(\N)$ with orthonormal basis $\delta_\mu$ indexed by words $\mu=\mu_1\cdots\mu_k$ with letters $\mu_i$ in $\N$.
Write $H$ for the quotient space of above and write $[\mu,\xi]$ for the image of $\delta_\mu\otimes\xi$ in $H$.
We define an inner product on it as follows:
$$\langle [\mu,\xi],[\nu,\eta]\rangle=0$$
if $\mu$ is not an initial subword of $\nu$ and if $\nu$ is not an initial subword of $\mu$.
Take now $[\mu\gamma,\xi]$ and $[\mu,\eta]$ where $\mu,\gamma$ are words.
We set $$\langle [\mu \gamma,\xi], [\mu,\eta]\rangle :=\langle \xi, A_\gamma \eta\rangle.$$
This defines in a unique manner a sesquilinear map on $H$ which is an inner product.

Indeed, it is equal to the following direct limit of Hilbert spaces.
For each $k\geq 1$ we set $H_k$ to be the tensor product $\ell^2(\N^k)\otimes K$: the $\ell^2$-functions $\xi:\N^k\to K, \mu\mapsto \xi(\mu).$
Now, consider the connecting map
$$\iota_k:H_k\to H_{k+1},\ \iota_k(\xi)(\mu\cdot j):=A_j(\xi(\mu))$$
for $k\geq 1, j\in\N, \xi\in H_k,\mu\in \N^k.$
This is an isometry precisely because we are under the assumption that $$\sum_i A_i^*A_i=\id_K$$
(i.e.~the net of finite sums converges in norm to $\id_K$).
One can now easily check that $H$ and $\varinjlim_n H_n$ are unitary equivalent under the obvious map.

We claim that the formula
$$T:S_\mu S_\nu^*\wh\Omega\mapsto [\mu, A_\nu \Omega]$$
defines a unitary transformation from $\wh K$ to $H$.

To see this recall that the set of vectors $s_\mu s_\nu^*$ where $\mu,\nu$ are finite words in $\N$ is total inside $\cO_\infty$.
Hence, the set of vectors $S_\mu S_\nu^*\wh\Omega$ is total inside the Hilbert space $\wh K$.
Take four words $\mu,\nu,\alpha,\beta$ and note that
$$\langle S_\mu S_\nu^*\wh\Omega,S_\alpha S_\beta^*\wh\Omega\rangle=f\circ\varphi_\infty(s_\beta s_\alpha^* s_\mu s_\nu^*)$$
where $$f(x)=\langle x\cdot \Omega,\Omega\rangle,\ x\in\cP_\infty$$
is the vector state associated to $(A_i,K,\Omega).$
When $\mu$ is not an initial subword of $\alpha$ and $\alpha$ is not an initial subword of $\mu$ we obtain zero.
Assume now that $\alpha=\mu\gamma$ for some word $\gamma.$
We obtain:
$$\langle S_\mu S_\nu^*\Omega,S_\alpha S_\beta^*\Omega\rangle=f\circ\varphi_\infty(s_\beta s_\alpha^*s_\mu s_\nu^*)=f\circ \varphi_\infty(s_\beta s_\gamma^* s_\nu^*) = f(a_\beta^* a_\gamma a_\nu) =\langle A_\gamma A_\nu \Omega, A_\beta \Omega\rangle.$$
This latter is equal to 
$$\langle [\mu, A_\nu\Omega], [\alpha, A_\beta\Omega]\rangle.$$
This proves that the formula $S_\mu S_\nu^*\wh\Omega\mapsto [\mu, A_\nu\Omega]$ provides a well-defined linear map $T$ whose domain is a dense subspace of $\wh K$ and that is isometric. 
By density it extends uniquely to an isometry.
By assumption the closed linear span of the $A_\mu\Omega$ is equal to $K$ (where note we do not consider adjoints of the generators $A_i$'s).
This implies that $T$ has dense range. Since $T$ is isometric it has complete range and thus $T$ is surjective.
We have proven that $T$ is a unitary transformation.

Observe now that by transporting the action of $\cO_\infty$ on $H$ we obtain the formula:
$$S_\nu [\mu,\eta]=[\nu\mu,\eta].$$
In particular, $S_\nu^*[\mu,\eta]$ is zero unless $\mu\in \nu\N^*$ or $\nu\in \mu\N^*.$
Moreover, 
\begin{equation}\label{eq:intertwinne}S_\nu^* [\emptyset,\eta]=[\emptyset, A_\nu \eta].\end{equation}
Hence, we have an obvious isometric embedding $$K\into \wh K, \eta\mapsto [\emptyset,\eta]$$
which intertwines the $A_i$'s with the adjoint of the $S_i$'s.
From now on we identify $\wh K$ with $H$ and identify $K$ with its image inside $H$.
Hence, we write $\eta$ rather than $[\emptyset,\eta]$ if $\eta\in K.$

Our aim is to show that $\wh K$ remembers $K$ in the following manner.
We consider Hilbert subspaces of $\hat K$ that are closed under the actions of the $S_i^*$'s (but they could not be closed under the actions of the $S_j$'s without the star).
We call such subspaces \emph{submodules} of $\wh K.$
A specific submodule is (the image of) $K$. Indeed, we have the formula $S_i^*\eta=A_i\eta$ for $\eta\in K$ and thus $K$ is closed under the action of the $S_i^*$.
We will show that all nonzero submodules of $\wh K$ contain $K$.

Let $\xi\in \wh K$ be a unit vector and write $M_\xi$ for the closed linear span of the $S_\mu^*\xi$, i.e.~the submodule generated by $\xi$.
Fix $0<\varep<1/4.$
By density there exists a unit vector $\eta$ of the form $\sum_{\mu\in I}[\mu,\eta_\mu]$ with $I$ finite so that $\|\xi-\eta\|<\varep.$
Up to using the formula $[\mu,\zeta]=\sum_{j\in\N} [\mu\cdot j, A_j\zeta]$ we may moreover assume that for all $\mu\neq\mu'$ inside $I$ we have that $\mu\notin \mu'\N^*$ and $\mu'\notin \mu\N^*.$
For instance take all the $\mu$'s in $I$ of same length say $\ell$.
This assures that the $[\mu,\eta_\mu]$ are pairwise orthogonal for $\mu\in I.$
Moreover, the $S_\mu$'s with $\mu\in I$ have pairwise orthogonal ranges and thus $$\|\sum_{\mu\in I} S_\mu S_\mu^* \xi\|^2=\sum_{\mu\in I} \|S_\mu^*\xi\|^2.$$
Up to growing $I$ (i.e.~adding words of length $\ell$) we have that $\xi$ and $\sum_{\mu\in I} S_\mu S_\mu^* \xi$ can be made arbitrarily close, say $\varep$-close.
We deduce that 
\begin{align*}
\sum_{\mu\in I} \|S_\mu^*\xi - \eta_\mu\|^2 & = \sum_{\mu\in I} \|S_\mu S_\mu^*\xi - [\mu,\eta_\mu]\|^2\\
& =\| \sum_{\mu\in I} S_\mu S_\mu^* \xi - \eta\|^2\\
& \leq ( \|\sum_{\mu\in I} S_\mu S_\mu^* \xi - \xi\| + \|\xi-\eta\|)^2 < 4 \varep^2.  
\end{align*}
Write $J\subset I$ for the subset of $\mu\in I$ satisfying $S_\mu^*\xi\neq 0$ and $\eta_\mu\neq 0.$

We claim that there is at least one $\mu\in J$ satisfying $$\| \dfrac{S_\mu^*\xi}{\|S_\mu^*\xi\|} - \dfrac{\eta_\mu}{\|\eta_\mu\|}\|<4\varep.$$

Indeed, observe that there is at least one $\mu\in I$ satisfying $$\|S_\mu^*\xi-\eta_\mu\|<2\varep\cdot \|\eta_\mu\|$$ since otherwise we would obtain
$$\sum_{\mu\in I}\|S_\mu^*\xi - \eta_\mu\|^2   \geq \sum_{\mu\in J} (2\varep)^2\cdot \|\eta_\mu\|^2 =4\varep^2$$
contradicting a previous inequality.
Take this specific $\mu$. It must be in $J$.
We have
\begin{align*}
\| \dfrac{\eta_\mu}{\|\eta_\mu\|}-\dfrac{S_\mu^*\xi}{\|S_\mu^*\xi\|} \| & \leq \| \dfrac{\eta_\mu}{\|\eta_\mu\|}-\dfrac{S_\mu^*\xi}{\|\eta_\mu\|} \| + \| \dfrac{S_\mu^*\xi}{\|\eta_\mu\|}-\dfrac{S_\mu^*\xi}{\|S_\mu^*\xi\|} \|\\
& < 2 \varep + \| \dfrac{S_\mu^*\xi}{\|\eta_\mu\|}-\dfrac{S_\mu^*\xi}{\|S_\mu^*\xi\|} \|\\
& = 2 \varep + | \dfrac{\|S_\mu^*\xi\|}{\|\eta_\mu\|}- 1 |\\
& = 2 \varep + | \dfrac{\|S_\mu^*\xi\| - \|\eta_\mu\|}{\|\eta_\mu\|} |\\
& \leq 2 \varep + \dfrac{\|S_\mu^*\xi - \eta_\mu\|}{\|\eta_\mu\|} \\
& < 4 \varep.
\end{align*}

We have proven that the distance between the unit spheres of $M_\xi$ and $K$ is smaller than $4\varep$ and thus must be zero.
We now use the assumption 
$$\dim(K)<\infty.$$
Indeed, this implies that the unit sphere of $K$ is compact.
Therefore, there must exist a vector $\zeta$ in the unit sphere of $K$ at distance zero from the unit sphere of $M_\xi$.
This means that $\zeta$ is in $M_\xi$ and thus $K\cap M_\xi$ is nonzero.
We now use the strong irreduciblity assumption: the space $K$ is irreducible as module (i.e.~it does not contain any proper nonzero Hilbert subspace closed under the action of the adjoints of the $S_i$'s). 
Hence, $K\subset M_\xi$.
Therefore, any nonzero submodule of $\wh K$ contains $K$.
We may then canonically recover $K$ inside $\wh K$ by taking the intersection of all nonzero submodules of $\wh K.$

Assume now that $H_0\subset \wh K$ is a subrepresentation of $\cO_\infty$.
The space $H_0$ is in particular closed under the action of the $S_i^*$ and is thus a submodule.
Hence, it contains $K$ and thus contains the vector $\Omega$. 
This vector is clearly cyclic for the representation $\wh K$ of $\cO_\infty$ since $S_\mu S_\nu^*\Omega=[\mu,A_\nu\Omega]$ and since $\{A_\nu\Omega:\ \nu\in\N^*\}$ is total inside $K$.
Hence, $H_0=\wh K$ and $\wh K$ is an irreducible representation of $\cO_\infty.$

Assume now that we have two representations $K,K'$ of $\cP_\infty$ satisfying the assumption of the theorem.
Assume that there is a unitary transformation $u:\wh K\to \wh K'$ intertwining the $\cO_\infty$-actions.
In particular, it intertwines the module structures. 
It must then send $K$ to $K'$ since $K$ (resp.~$K'$) is equal to the intersection of all nonzero submodules of $\wh K$ (resp.~$\wh K'$).
Hence, the map $u$ realises an isomorphism of modules from $K$ to $K'$: $u(A_i\eta)=A_i'u(\eta)$ for all $\eta\in K,i\geq 1$ and $u(K)=K'.$
Since $u$ is a unitary we deduce that $u$ restricted from $K$ to $K'$ intertwines the $A_i^*$ with the $(A_i')^*$. Therefore, $K$ and $K'$ are isomorphic as representations of $\cP_\infty$.
\end{proof}

By specialising the previous theorem to the dimension $1$ case we deduce the following.

\begin{corollary}
Let $z=(z_i)_{i\geq 1}$ be a point of the unit sphere $\cS_\infty$ of $\ell^2(\N)$, i.e.~$\sum_i|z_i|^2=1$ and $z_i\in\C.$
The map $a_i\mapsto z_i$ defines a one-dimensional representation $\pi_z$ of $\cP_\infty$. 
Denote by $H_z$ the associated representation of $\cO_\infty$ obtained from the GNS triple $(z,\C,1).$
The representations $H_z$ with $z\in \cS_\infty$ are all irreducible and pairwise inequivalent. 
\end{corollary}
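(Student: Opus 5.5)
The plan is to apply the previous theorem with $K=\C$, $A_i=z_i$ and $\Omega=1$.

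First I would check that $\pi_z$ is a representation of $\cP_\infty$. For any finite $F\subset\N$ we have $\sum_{i\in F}|z_i|^2\leq 1$, so by universality $a_i\mapsto z_i$ defines a one-dimensional representation.

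Next I would verify the two axioms, together with strong irreducibility. Finite dimensionality is clear. The finite sums $\sum_{i\in F}\overline{z_i}z_i$ converge to $\sum_i|z_i|^2=1=\id_\C$; in dimension one, norm convergence is just convergence of this numerical series. Strong irreducibility is automatic, since the only subspaces of $\C$ are $\{0\}$ and $\C$. So the first assertion of the theorem applies, and each $H_z$ is an irreducible representation of $\cO_\infty$.

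For pairwise inequivalence, I would use the second assertion of the theorem: $H_z\simeq H_w$ if and only if $\pi_z\simeq\pi_w$ as representations of $\cP_\infty$. Two one-dimensional representations are unitarily equivalent exactly when there is a unimodular scalar $u$ with $u z_i u^{-1}=w_i$ for all $i$. Since scalars commute, this forces $z_i=w_i$ for all $i$, so $z=w$.

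There is no real obstacle here. The only points needing care are that the hypothesis asks for norm convergence of the finite sums to the identity, which in dimension one reduces to $\|z\|_2=1$, and that conjugation by a unitary acts trivially on one-dimensional representations. The latter is what makes the parametrisation by the whole sphere injective, rather than by a quotient of it.
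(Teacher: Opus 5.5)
Your proposal is correct and is exactly the paper's argument: the paper obtains the corollary by specialising the preceding theorem to $K=\C$, $A_i=z_i$, $\Omega=1$, where finite dimensionality, convergence of $\sum_i |z_i|^2$ to $1$, and the invariant-subspace hypothesis all hold trivially. Your remark that one-dimensional representations are unitarily equivalent only when the scalars $z_i$ agree is precisely what turns part (2) of the theorem into pairwise inequivalence of the $H_z$ over the whole sphere.
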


\end{document}